\documentclass[12pt]{article}
\usepackage[english]{babel}
\usepackage{amsmath,amssymb,amsfonts,amscd}
\usepackage{amsthm}
\textheight24.5cm \textwidth16.5cm
\topmargin-1.5cm \oddsidemargin0cm \evensidemargin-1.5cm
\newtheorem {theorem}{Theorem}
\newtheorem{proposition}[theorem]{Proposition}
\newtheorem{lemma}[theorem]{Lemma}
\newtheorem{exemple}[theorem]{Example}

\newcommand{\R}{\mathbb R}

\newcommand{\w}{^W\!\!D}
\newcommand{\wu}{^W\!\!U}
\begin{document}

\title{Dirichlet problem associated with Dunkl Laplacian  on $W$-invariant open sets}
\author{
Mohamed Ben Chrouda\\
{\small Department of Mathematics, High Institute of Informatics and Mathematics}\\  {\small 5000 Monastir, Tunisia}\\
{\small E-mail: benchrouda.ahmed@gmail.com} \medskip\\
and \medskip \\
Khalifa El Mabrouk\\
{\small Department of Mathematics, High School of Sciences and Technology} \\ {\small 4011 Hammam Sousse, Tunisia}
\\
{\small E-mail: khalifa.elmabrouk@fsm.rnu.tn}
}
\date{}
\maketitle
\begin{abstract}
 Combining probabilistic  and analytic tools from potential theory, we investigate  Dirichlet problems associated with the Dunkl Laplacian $\Delta_k$. We establish, under some  conditions on the open set $D\subset\R^d$,  the existence of a unique continuous function $h$ in the closure of $D$, twice differentiable in $D$, such that
$$
\Delta_kh=0 \quad\textrm{in}\;D\quad\textrm{and}\quad h=f\quad\textrm{on}\; \partial D.
$$
We also give a probabilistic  formula characterizing  the solution $h$. The function $f$ is assumed to be continuous on the Euclidean boundary $\partial D$ of $D$.
\end{abstract}

\section{Introduction}

In their  monograph~\cite{hansen}, J.~Bliedtner and W. Hansen developed   four descriptions of potential theory using   balayage spaces, families of harmonic kernels, sub-Markov semigroups and  Markov processes. They proved that all these descriptions are  equivalent and gave a straight presentation of balayage theory which is, in particular,  applied to the generalized Dirichlet problem associated with a large class of differential and pseudo-differential operators.

Let $W$ be a finite reflection group on $\R^d$, $d\geq 1$, with root system $R$ and we fix a positive subsystem $R_+$ of $R$ and a nonnegative multiplicity function $k:R\to\R_+$.
For every $\alpha\in R$, let $H_\alpha$ be the hyperplane  orthogonal to $\alpha$ and $\sigma_\alpha$ be the reflection with respect to~$H_\alpha$, that is, for every~$x\in\R^d$,
$$
\sigma_\alpha x=x-2 \frac{\langle x,\alpha\rangle}{|\alpha|^2}\alpha
$$
where $\langle\cdot,\cdot\rangle$ denotes the Euclidean inner product of $\R^d$. C.~F.~Dunkl introduced in \cite{dunkl} the operator
 $$
 \Delta_k=\sum_{i=1}^dT_i^2,
 $$
 which will be called  later \emph{Dunkl Laplacian},
 where, for $1\leq i\leq d$, $T_i$ is the differential-difference operator defined for $f\in C^1(\R^d)$ by
$$T_i f(x)=\frac{\partial f}{\partial x_i}(x)+\sum_{\alpha\in R_+}k(\alpha)\alpha_i\frac{f(x)-f(\sigma_\alpha x)}{\langle\alpha,x\rangle}.$$

Our main goal in this  paper is to investigate  the \emph{Dirichlet problem} associated with the Dunkl Laplacian.
 More precisely,  given a bounded open set $D\subset\R^d$ and  a continuous real-valued function $f$ on $D^c:=\R^d\setminus D$, we are concerned  with   the  following problem:
\begin{equation}\label{dirp}
\displaystyle\left\{
\begin{array}{rcll}
 \Delta_kh & = & 0 & \mbox{in }D, \\
  h & = & f& \mbox{on }D^c.
\end{array}
\right.
\end{equation}
We mean by a solution of~(\ref{dirp}) every function $h:\R^d\to\R$ which is continuous in $\R^d$,  twice differentiable in~$D$ and such that both equations in~(\ref{dirp}) are pointwise fulfilled.  In the particular case where $D$ is the unit ball of $\R^d$, M.~Maslouhi and E.~H.~Youssfi~\cite{masl} solved problem (\ref{dirp})
  by methods from harmonic analysis using the Poisson kernel for $\Delta_k$  which is introduced by C.~F.~Dunkl and Y.~Xu~\cite{dunklxu}.
  It should be noted that, for balls with center $a\not=0$, the Poisson kernel for $\Delta_k$ is not known up to now.

Let us briefly introduce our approach. It is well known (see \cite{gy} and references therein) that there exists a c\`adl\`ag $\R^d$-valued Markov process $$X=(\Omega,{\mathcal{F}},{\mathcal{F}}_t,X_t,P^x),$$ which is called  \emph{Dunkl process},  with infinitesimal generator $\frac12\Delta_k$. For a given bounded Borel function $h:\R^d\to\R$, we define
$$
H_U h(x)=E^x[h(X_{\tau_U})]
$$
for every $x\in\R^d$ and every bounded open subset $U$ of $\R^d$, where
$$
 \tau_U=\inf\{t>0; X_t\notin U\}
 $$
 denotes the first exit time from $U$ by~$X$. We first show  that if $h$ is continuous in~$\R^d$ and twice differentiable in~$D$ then $\Delta_kh=0$ in $D$ if and only if $h$ is $X$-harmonic in~$D$, i.e., $ H_Uh(x)=h(x)$
 for every open set $U$ such that $\overline{U}\subset D$ (we shall write $U\Subset D$) and for every $x\in U$. We then conclude, using the general framework of balayage spaces \cite{hansen},  that problem~(\ref{dirp}) admits at most one solution. Moreover, if the open set~$D$ is regular for the Dunkl process, then  $H_D f$ will be the solution of~(\ref{dirp}) provided it is of class $C^2$ in~$D$.

For some examples of Markov processes, namely Brownian motion or $\alpha$-stable process, some additional geometric assumptions on  the Euclidean boundary $\partial D$ of $D$ permit a decision on the regularity of $D$. In fact, it is well known that $D$ is regular, with respect to  Brownian motion or $\alpha$-stable process, whenever  each  boundary point of $D$ satisfies  the "cone condition". For a particular choice of the root system $R$, we shall prove in Section~3 that the  cone condition is still sufficient for the regularity of~$D$ with respect to the Dunkl process. However, we could not  know whether this result holds   true for arbitrary root systems. In this  setting, we  only show that balls  of center $0$ are regular.

Finally, assuming that $D$ is regular, the study of problem~(\ref{dirp}) is equivalent to the study of smoothness of~$H_Df$. Indeed, as was mentioned above,~(\ref{dirp}) has a solution if and only if $$H_Df\in C^2(D).$$ To that end, we need to assume that $D$ is \emph{$W$-invariant} which means that $\sigma_\alpha(D)\subset D$ for every $\alpha\in R$. Hence, using the fact that the operator  $\Delta_k$ is hypoelliptic in $D$ (see \cite{kk1,mt2}) we prove that $H_Df$ is infinitely differentiable in $D$. Thus, we  not only deduce the existence and uniqueness of the solution to
\begin{equation}\label{dirp2}
\displaystyle\left\{
\begin{array}{rcll}
 \Delta_kh & = & 0 & \mbox{in }D, \\
  h & = & f& \mbox{on }\partial D,
\end{array}
\right.
\end{equation}
but we also prove that $h$ is given by the formula $h(x)=E^x[f(X_{\tau_D})]$.

 Throughout this paper, let $\lambda=\gamma+\frac d2-1$ and  assume that $\lambda >0$.
\section{Harmonic Kernels}
For the sake of simplicity, we assume in all the following that $|\alpha|^2=2$ for every $\alpha\in R$.
It follows from \cite{dunkl} that, for $f\in C^2(\R^d)$,
\begin{equation}\label{lapd}
\Delta_kf(x)=\Delta f(x)+2\sum_{\alpha\in R_+}k(\alpha)\left(\frac{\langle\nabla f(x),\alpha\rangle}{\langle\alpha,x\rangle}-\frac{f(x)-f(\sigma_\alpha(x))}{\langle\alpha,x\rangle^2}\right),
\end{equation}
where $\Delta$ denotes the usual Laplacian on $\R^d$. M. R\"osler has shown in \cite{rosler2} that $\frac12\Delta_k$ generates a Feller semigroup $P_t^k(x,dy)= p_t^k(x,y)w_k(y)dy$ which has the expression
\begin{equation}\label{chad}
p_t^k(x,y)=\frac{1}{c_kt^{\gamma+\frac{d}{2}}}
\exp\left(-\frac{|x|^2+|y|^2}{2t}\right)E_k\left(\frac{x}{\sqrt{t}},\frac{y}{\sqrt{t}}\right),
\end{equation}
where $E_k(\cdot,\cdot)$ is the Dunkl kernel associated with $W$ and $k$ (see \cite{dunklxu}), the constant $c_k$ is taken such that $P_1^k1\equiv1$, $\;\gamma=\sum_{\alpha\in
R_+}k(\alpha)$ and $w_k$ is the $W$-invariant weight function defined on~$\R^d$ by
$$
w_k(y)=\prod_{\alpha\in
R_+}|\langle y,\alpha\rangle|^{2k(\alpha)}.
$$
Let $X=(\Omega,{\mathcal{F}},{\mathcal{F}}_t,X_t,P^x)$ be the Dunkl process in $\R^d$ with transition kernel $P_t^k(x,dy)$.
For every bounded open subset $D$ of $\R^d$, let $\tau_D$ be the first  exit time  from $D$ by $X$. A point $z\in\partial D$ is said to be \emph{regular} (for $D$) if $P^z[\tau_D=0]=1$ and \emph{irregular}  if $P^z[\tau_D=0]=0$.
Notice that by Blumenthal's zero-one law, each boundary point of $D$ is either regular or irregular. It is also easy verified that the fact that  Dunkl process has right continuous paths yields that $P^x[\tau_D=0]=0$ if  $x\in D$ and $P^x[\tau_D=0]=1$ if  $x\in \R^d\setminus\overline D$.

\begin{proposition}\label{taus}
 $E^x[\tau_D]<\infty$ for  every $x\in \R^d$ and every bounded open subset $D$ of $\R^d$.
\end{proposition}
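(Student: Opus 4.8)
The plan is to reduce to a centred ball and then exhibit an explicit Lyapunov-type function for $\tfrac12\Delta_k$. Since $D$ is bounded, I would fix $R>0$ with $D\subset B:=B(0,R)$. The process must leave $D$ no later than it leaves $B$, so $\tau_D\le\tau_B$ pointwise and it suffices to prove $E^x[\tau_B]<\infty$. Moreover, for $x$ with $|x|>R$ one has $x\notin\overline D$, hence $\tau_D=0$ $P^x$-a.s., so only points of $\overline B$ require an argument.

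The computational heart is to apply $\Delta_k$ to $u(x):=R^2-|x|^2$, a smooth function on $\R^d$. In formula~(\ref{lapd}) one has $\nabla u(x)=-2x$, so $\langle\nabla u(x),\alpha\rangle/\langle\alpha,x\rangle=-2$ for every $\alpha\in R_+$; and since each reflection $\sigma_\alpha$ is a Euclidean isometry, $u(x)-u(\sigma_\alpha x)=|\sigma_\alpha x|^2-|x|^2=0$. Hence the difference part of $\Delta_k u$ reduces to a constant and
$$\Delta_k u\equiv-(2d+4\gamma),\qquad\text{that is,}\qquad\tfrac12\Delta_k u\equiv-c\ \text{ with }\ c:=d+2\gamma.$$
Here $c=d+2\gamma\ge 1>0$ (indeed $c=2\lambda+2$), and $0\le u\le R^2$ on $\overline B$.

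Next I would invoke Dynkin's formula. Because $\tfrac12\Delta_k$ generates $X$ and $u\in C^2(\R^d)$, the process $u(X_t)-u(X_0)-\int_0^t\tfrac12\Delta_k u(X_s)\,ds$ is a local martingale; stopped at the bounded time $t\wedge\tau_B$ it becomes a bounded martingale, since the stopped process stays in the compact set $\overline B$. For this one uses that the jumps of the Dunkl process are of the form $y\mapsto\sigma_\alpha y$ and therefore preserve the Euclidean norm, so that $X_{\tau_B}\in\overline B$ (the radial part of $X$ never overshoots the sphere $\partial B$ by a jump). Optional stopping then gives
$$u(x)=E^x\!\left[u(X_{t\wedge\tau_B})\right]+c\,E^x[t\wedge\tau_B]\ \ge\ c\,E^x[t\wedge\tau_B],$$
because $u\ge 0$ on $\overline B$. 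Hence $E^x[t\wedge\tau_B]\le u(x)/c\le R^2/c$ for every $x\in\overline B$; letting $t\to\infty$ and using monotone convergence yields $E^x[\tau_B]\le R^2/(d+2\gamma)$. Combining with $\tau_D\le\tau_B$ (and with $\tau_D=0$ $P^x$-a.s.\ when $|x|>R$, as then $x\notin\overline D$) gives $E^x[\tau_D]\le R^2/(d+2\gamma)<\infty$ for all $x\in\R^d$.

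The routine calculus aside, the two points needing care — which I would draw from the theory of the Dunkl process — are that $X$ solves the martingale problem for $\tfrac12\Delta_k$ on $C^2$ functions (so Dynkin's formula is legitimate for the unbounded $u$ once we stop inside a compact set), and that all jumps of $X$ are single reflections $y\mapsto\sigma_\alpha y$; the latter is exactly what keeps $X_{\tau_B}$ in $\overline B$ and lets the plain quadratic $u$ work without any truncation. I expect these structural facts, rather than the estimate itself, to be the only real obstacle.
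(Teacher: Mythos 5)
Your argument is correct, but it is genuinely different from the paper's. The paper estimates $E^x[\tau_D]$ by bounding it with $\int_0^\infty E^x[\mathbf 1_B(X_s)]\,ds$ for a centred ball $B\supset D$ and computing this occupation integral explicitly from the heat kernel (\ref{chad}): it uses the spectral representation of $p_s^k$, the spherical average of the Dunkl kernel (which produces the normalized Bessel function $j_\lambda$), and a Weber--Schafheitlin type integral from Abramowitz--Stegun to arrive at the closed bound $\frac{r^2}{2\lambda}-\frac{|x|^2}{2\lambda+2}$. You instead use the Lyapunov function $u(x)=R^2-|x|^2$: by (\ref{lapd}), $\nabla u(x)=-2x$ gives $\langle\nabla u,\alpha\rangle/\langle\alpha,x\rangle=-2$ and the reflection-invariance of $|\cdot|$ kills the difference part, so $\tfrac12\Delta_k u\equiv-(d+2\gamma)=-(2\lambda+2)$; Dynkin's formula at the bounded time $t\wedge\tau_B$, optional stopping and monotone convergence then give $E^x[\tau_B]\le (R^2-|x|^2)/(2\lambda+2)$, which is in fact sharper than the paper's bound and avoids all special-function machinery. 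What the paper's computation buys is that it stays entirely inside the semigroup/kernel formalism it has already set up, needing no martingale-problem input; what yours buys is brevity, a cleaner constant, and transparency about why the result is a pure "drift" estimate.

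Two points in your sketch deserve to be made explicit. First, the local-martingale property of $u(X_t)-u(X_0)-\int_0^t\tfrac12\Delta_k u(X_s)\,ds$ for the unbounded $u$ should be reduced to a function in the generator's domain: replace $u$ by $\tilde u\in C_c^2(\R^d)$ agreeing with $u$ on a slightly larger \emph{centred} ball; since $\Delta_k$ is non-local only through the reflections $\sigma_\alpha$, which preserve centred balls, one still has $\Delta_k\tilde u=\Delta_k u$ there, so your computation survives the cutoff (a non-invariant cutoff would not obviously do). Second, the fact that $X_{t\wedge\tau_B}\in\overline B$, i.e.\ that the process cannot jump across the sphere, rests on the jumps being single reflections $y\mapsto\sigma_\alpha y$; this is exactly the Gallardo--Yor fact (their Proposition 3.2, equivalently the form (\ref{lvk}) of the L\'evy kernel) that the paper itself invokes later in the proof of Theorem \ref{support}, so it is available, but note you cannot quote Theorem \ref{support} itself, as its proof uses $P^x(\tau_D<\infty)=1$ and hence the present proposition; your stopping at $t\wedge\tau_B$ correctly avoids that circularity.
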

\begin{proof} Let $D$ be a bounded open subset of $\R^d$, $x\in \R^d$ and choose $r>0$ such that the ball $B=B(0,r)$ contains  $x$ and  $D$. Then, applying Fubini's theorem and using spherical coordinates,
\begin{eqnarray*}
  E^x[\tau_B]
   &\leq&  \int_0^{\infty}E^x[\mathbf{1}_B(X_s)]ds\\
   &=&\int_0^rt^{2\lambda+1}\int_0^\infty\int_{S^{d-1}} p_s^k(x,tz)w_k(z)\sigma(dz) ds\, dt.
   \end{eqnarray*}
Here and in all the following,  $\sigma$ denotes the surface area measure on the unit sphere $S^{d-1}$ of $\R^d$. It is well known (see \cite{rosler2,rosler3}) that for every $x,y\in \R^d$ and $s>0$,
$$
p_s^k(x,y)=\frac{1}{c_k^2}\int_{\R^d}e^{-\frac{s}{2}|\xi|^2}E_k(-ix,\xi)E_k(iy,\xi)w_k(\xi)d\xi
$$
 and
 $$
 \int_{S^{d-1}}E_k(ix,\xi)w_k(\xi)\sigma(d\xi)=\frac{c_k}{2^\lambda\Gamma(\lambda+1)}j_\lambda(|x|),
 $$
 where
$$
j_\lambda(z):=\Gamma(\lambda+1)\sum_{n=0}^\infty\frac{(-1)^nz^{2n}}{4^nn!\Gamma(n+\lambda+1)}
$$
is the Bessel normalized function.
 Hence
\begin{eqnarray}
  E^x[\tau_D] \nonumber &\leq& \int_0^{\infty}E^x[\mathbf{1}_B(X_s)]ds \nonumber\\
  &=&
  \frac{1}{2^{2\lambda-1}(\Gamma(\lambda+1))^2}\int_0^rt^{2\lambda+1}\int_0^\infty j_\lambda(ut)j_\lambda(u|x|)u^{2\lambda-1}dudt \nonumber \\
     &= &
   \frac{2^{2\lambda-1}\Gamma(\lambda+1)\Gamma(\lambda)}{2^{2\lambda-1}(\Gamma(\lambda+1))^2}\int_0^rt^{2\lambda+1}(\max(t,|x|))^{-2\lambda}dt\label{locb1}\\
   &=& \frac{r^2}{2\lambda}-\frac{|x|^2}{2\lambda+2}<\infty.\label{locb}
\end{eqnarray}
In order to get~(\ref{locb1}) one should think about  formula (11.4.33)  in \cite{abram}.
\end{proof}

Let $D$ be a bounded open subset of $\R^d$. For every $x\in \R^d$, the  exit distribution $H_D(x,\cdot)$  from $D$ by the Dunkl process starting at $x$ will be called {\em harmonic measure} relative to $x$ and $D$. That is, for every Borel subset $A$ of $\R^d$,
$$
H_D(x,A)=P^x(X_{\tau_D}\in A).
$$
  It is clear that $H_D(x,\cdot)=\delta_x$ the Dirac measure at $x$ whenever  $x\in \partial D$ is regular or $x\not\in \overline D$. We define
  $$
\w:=\cup_{w\in W}w(D)\quad \mbox{and}\quad \Gamma_D:=\overline{\w}\setminus D.
$$
In other words, $\w$ is the smallest open set  containing $D$ which is invariant under the reflection group $W$. The following theorem ensures that $H_D(x,\cdot)$ is supported by $\Gamma_D$ for every $x\in \overline{D}$.

\begin{theorem}\label{support}
Let $D$ be a bounded open subset of $\R^d$. Then for every $x\in \overline{D}$,
\begin{equation}\label{gmd}
P^x\left(X_{\tau_D}\in\Gamma_D\right)=1.
\end{equation}
\end{theorem}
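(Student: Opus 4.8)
The plan is to argue pathwise, combining the c\`adl\`ag structure of the Dunkl process with the explicit description of its jumps. Reading off the generator~(\ref{lapd}), the difference part of $\frac12\Delta_k$ attached to a root $\alpha\in R_+$ is the jump operator $f\mapsto\frac{k(\alpha)}{\langle\alpha,\cdot\rangle^2}\,(f\circ\sigma_\alpha-f)$; hence (see~\cite{gy}) the only discontinuities of a path of $X$ are jumps of the form $y\mapsto\sigma_\alpha y$, $\alpha\in R_+$, while $|X_t|$ has continuous paths. Fix $x\in\overline D$. By Proposition~\ref{taus} we have $\tau_D<\infty$ $P^x$-a.s., so it suffices to prove that $X_{\tau_D}\notin D$ and $X_{\tau_D}\in\overline{\w}$; together these say exactly $X_{\tau_D}\in\overline{\w}\setminus D=\Gamma_D$.

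I would first dispose of $X_{\tau_D}\notin D$, which is the standard fact that a right-continuous path leaves an open set at its exit time: either $\tau_D$ already belongs to $\{t>0:X_t\notin D\}$, or there are $t_n\downarrow\tau_D$ with $X_{t_n}\in D^c$, and then right continuity gives $X_{\tau_D}=\lim_n X_{t_n}\in D^c$ since $D^c$ is closed.

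The substantive point is $X_{\tau_D}\in\overline{\w}$, which I would establish on each path with $\tau_D<\infty$. If $\tau_D=0$, then $X_{\tau_D}=X_0=x\in\overline D\subset\overline{\w}$. If $\tau_D>0$, then $X_s\in D$ for all $0<s<\tau_D$, so the left limit $Y:=X_{\tau_D-}$ exists and lies in $\overline D$. Should the path be continuous at $\tau_D$, then $X_{\tau_D}=Y\in\overline D\subset\overline{\w}$. Should it jump at $\tau_D$, then $X_{\tau_D}=\sigma_\alpha Y$ for some $\alpha\in R_+$, whence, using $Y\in\overline D$,
$$
X_{\tau_D}\in\sigma_\alpha(\overline D)=\overline{\sigma_\alpha(D)}\subset\overline{\w},
$$
the last inclusion because $\sigma_\alpha(D)\subset{\w}$ by the definition of $\w$. (Even if a discontinuity were produced by a general element $w\in W$ rather than a single reflection, the identity $w(\overline D)=\overline{w(D)}\subset\overline{\w}$ makes the same step go through.)

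The only genuinely non-elementary ingredient is the identification of the jumps of $X$ with the reflections $\sigma_\alpha$; granting that, the argument rests only on the finiteness of $\tau_D$ (Proposition~\ref{taus}), the left/right continuity of c\`adl\`ag paths, and the elementary observation that $\w$ contains $D$ together with all its reflected copies. I therefore expect the main obstacle to be stating and invoking the jump description precisely — in particular, that a discontinuity at the exit time $\tau_D$ is a single jump carrying $X_{\tau_D-}\in\overline D$ to a reflected point — rather than any analytic estimate.
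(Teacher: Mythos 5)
Your argument is correct and is essentially the paper's own proof: the paper also reduces the claim to the facts that $X_{\tau_D^-}\in\overline D$ on $\{0<\tau_D<\infty\}$, that right continuity forces $X_{\tau_D}\notin D$, and that the only jumps of the Dunkl process are of the form $y\mapsto\sigma_\alpha y$, which it formalizes via the functional $Y_t=\sum_{s<t}\mathbf{1}_{\{X_{s^-}\neq X_s\}}\digamma(X_{s^-},X_s)$ and \cite[Proposition 3.2]{gy}, exactly the ingredient you invoke. The only cosmetic difference is that the paper treats regular boundary points separately and phrases the pathwise dichotomy as an inclusion of events of probability zero, while you argue directly on each path.
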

\begin{proof} It is easily seen that for every regular boundary point $x$,  $P^x(X_{\tau_D}\in\Gamma_D)=\delta_x(\Gamma_D)=1$. Now, assume that $x\in D$ or $x\in\partial D$ is irregular and consider the function $\digamma$ defined for every $y,z\in\R^d$ by $\digamma(y,z)=0$ if $z\in\{\sigma_\alpha y;\alpha\in R_+\}$ and $\digamma(y,z)=1$ otherwise. Let
 $$
 Y_t:=\sum_{s<t}\mathbf{1}_{\{X_{s^-}\neq X_s\}}\digamma(X_{s^-},X_s), \quad t>0.
 $$
 It follows from \cite[Proposition 3.2]{gy} that for every $t>0$, $P^x(Y_t=0)=1$ and consequently
  $$
  P^x\left(\mathbf{1}_{\{X_{s^-}\neq X_s\}}\digamma(X_{s^-},X_s)=0;\forall s>0\right)=1.
  $$
  Then, since  $P^x(0<\tau_D<\infty)=1$ we deduce that
  $$
 P^x\left(\mathbf{1}_{\{X_{\tau_D^-}\neq X_{\tau_D}\}} \digamma(X_{\tau_D^-},X_{\tau_D})=0\right)=1.
  $$
  On the other hand, seeing that $X_{\tau_D^-}\in \overline D$ on $\{0<\tau_D<\infty\}$ we have
  $$
  \left\{X_{\tau_D}\not\in \Gamma_D, 0<\tau_D<\infty\right\}\subset
  \left\{ \mathbf{1}_{\{X_{\tau_D^-}\neq X_{\tau_D}\}} \digamma(X_{\tau_D^-},X_{\tau_D})=1  \right\}.
  $$
  This finishes the proof.
 \end{proof}

 Let $\mathcal{O}$ be the set of all bounded open subsets of $\R^d$. In the following, we denote by $\mathcal{B}_b(\R^d)$  the set of all bounded Borel measurable functions on $\R^d$. For every $D\in \mathcal{O}$ and $f\in \mathcal{B}_b(\R^d)$, let  $H_Df$ be the function defined on $\R^d$ by
 $$
 H_Df(x)= E^x\left[f(X_{\tau_D})\right]=
\int f(y) H_D(x,dy).
$$
Since $X$ is a Hunt process, it follows from the general framework of balayage spaces studied by   J. Bliedtner and W. Hansen  in \cite{hansen} that, for every $D\in \mathcal{O}$ and  $f\in \mathcal{B}_b(\R^d)$ with compact support,  $H_Df$ is continuous in $D$ and  for every $V\Subset D$,
\begin{equation}\label{vd}
H_VH_D=H_D \quad\textrm{in}\;\; V.
\end{equation}
Since $\textrm{supp}\,H_D(x,\cdot)\subset \Gamma_D$ for every $x\in \overline{\w}$, it is trivial that
$$
H_Df(x)= H_D\left(1_{\Gamma_D}f\right)(x),\quad x\in \overline{\w}.
$$
Hence, we immediately conclude that $H_Df$ is continuous in $D$. For every $D\in \mathcal{O}$ and  every $f\in \mathcal{B}_b(\Gamma_D)$, it will be convenient to denote again
\begin{equation}\label{hd}
 H_Df(x)=
\int f(y) H_D(x,dy),\quad x\in \overline{\w}.
\end{equation}
Let $U$ be an open subset of $\R^d$. A locally bounded function $h:^W\!\!\!\!U\rightarrow\R$ is said to be \emph{$X$-harmonic} in $U$ if $H_Dh(x)=h(x)$  for every open set $D\Subset U$ and every $x\in D$. If $U$ is bounded and $h$ is continuous in $\overline{^W\!U}$ then $h$ is $X$-harmonic in $U$ if and only if for every $x\in U$,
\begin{equation}\label{max}
h(x)=H_Uh(x).
\end{equation}
In fact, let $x\in U$ and let $(U_n)_{n\geq 1}$ be a sequence of nonempty bounded open subsets of $\R^d$ such that $x\in  U_n\Subset U_{n+1}$ and $U=\cup_nU_n$. Then  $(\tau_{U_n})_n$ converges to $\tau_U$ almost surely. Hence, the continuity of $h$ on $\overline{^W\!U}$ together with the quasi-left-continuity of the Dunkl process yield that $H_Uh(x)=\lim_nH_{U_n}h(x)$. The following proposition follows immediately from (\ref{max}).
\begin{proposition}\label{primax}
Let $U\in \mathcal{O}$  and let $h$ be a continuous function on $\overline{^W\!U}$. If $h$ is $X$-harmonic in $U$, then
$$\max_{x\in \overline{^W\!\!U}}h(x)=\max_{x\in \Gamma_U}h(x)\quad \textrm{and}\quad \min_{x\in \overline{^W\!\!U}}h(x)=\min_{x\in \Gamma_U}h(x).$$
\end{proposition}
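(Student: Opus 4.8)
The plan is to derive the result directly from the harmonicity identity~(\ref{max}), $h(x)=H_Uh(x)$ for $x\in U$, together with the support theorem (Theorem~\ref{support}). First I would record the elementary topological facts that make the statement meaningful: since $U$ is a nonempty bounded open set, $\wu$ is a nonempty bounded open set, so $\overline{\wu}$ is compact and strictly larger than $\wu$; hence $\Gamma_U=\overline{\wu}\setminus U$ is a nonempty compact set, and $\overline{\wu}=U\cup\Gamma_U$. Continuity of $h$ on the compact set $\overline{\wu}$ then guarantees that all four extrema in the statement are attained, and the inclusion $\Gamma_U\subset\overline{\wu}$ gives for free the inequalities $\max_{\Gamma_U}h\leq\max_{\overline{\wu}}h$ and $\min_{\Gamma_U}h\geq\min_{\overline{\wu}}h$.

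For the reverse inequality in the maximum assertion, I would pick a point $x_0\in\overline{\wu}$ at which $h$ attains its maximum. If $x_0\in\Gamma_U$ there is nothing to prove, so suppose $x_0\in U$. Since $h$ is continuous on $\overline{\wu}$ and $X$-harmonic in $U$, identity~(\ref{max}) applies and yields $h(x_0)=H_Uh(x_0)=E^{x_0}[h(X_{\tau_U})]$. Because $x_0\in U\subset\overline U$, Theorem~\ref{support} (with $D$ replaced by $U$) shows $P^{x_0}(X_{\tau_U}\in\Gamma_U)=1$, so $h(X_{\tau_U})\leq\max_{\Gamma_U}h$ holds $P^{x_0}$-almost surely; integrating gives $h(x_0)\leq\max_{\Gamma_U}h$, which is the desired reverse inequality. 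The minimum assertion is obtained by the same argument with all inequalities reversed, or equivalently by applying the maximum case to $-h$, which is again continuous on $\overline{\wu}$ and, by linearity of $H_U$, $X$-harmonic in $U$.

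I do not expect a genuine obstacle here: the content of the proposition is exactly the combination of~(\ref{max}) and the fact that the harmonic measure of $U$ lives on $\Gamma_U$. The only points requiring a little care are the bookkeeping that $\Gamma_U$ is nonempty and compact (so that $\max_{\Gamma_U}h$ and $\min_{\Gamma_U}h$ are well defined) and the verification that the hypothesis $x\in\overline U$ of Theorem~\ref{support} is met at the chosen extremal point $x_0\in U$; both are immediate.
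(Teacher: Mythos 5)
Your argument is correct and is exactly the one the paper intends: the paper simply states that the proposition "follows immediately from (\ref{max})", and your proof fills in those details in the natural way, combining $h(x)=H_Uh(x)=E^x[h(X_{\tau_U})]$ for $x\in U$ with Theorem~\ref{support} (so that $H_U(x,\cdot)$ is a probability measure carried by $\Gamma_U$) and the decomposition $\overline{\wu}=U\cup\Gamma_U$. No gaps; the handling of the extremal point and of $-h$ for the minimum is standard and sound.
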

We shall denote by $G^k$ the Green function of $\Delta_k$ which is defined for every $x,y\in \R^d$ by
$$
G^k(x,y)=\int_0^\infty p_t^k(x,y)dt.
$$
Since $p_t^k$ is symmetric in $\R^d\times\R^d$, we obviously see that the Green function $G^k$ is also symmetric in $\R^d\times\R^d$. Therefore, it follows from \cite[Theorem VI.1.16]{blum} that for every  $D\in \mathcal{O}$  and  for every $x,y\in \R^d$,
\begin{equation}\label{blum}
\int G^k(x,z)H_D(y,dz)=\int G^k(y,z)H_D(x,dz) .
\end{equation}
Furthermore, for every $y\in \R^d$, the function $G^k(\cdot,y)$ is excessive, that is, $G^k(\cdot,y)$ is lower semi-continuous in $\R^d$ and
$\int p_t^k(x,z)G^k(z,y)w_k(z)dz\leq G^k(x,y)$ for every $t>0$ and $x\in \R^d$. Consequently, it follows from \cite[Theorem IV.8.1]{hansen} that $G^k(\cdot,y)$ is hyperharmonic on $\R^d$, i.e., for every $D\in \mathcal{O}$ and for every $x\in \R^d$,
\begin{equation}\label{ghd}
\int G^k(z,y)H_D(x,dz)\leq G^k(x,y).
\end{equation}
\begin{lemma}
Let $f\in C^2_c(\R^d)$ and  $D\in \mathcal{O}$. For every $x\in \R^d$,
\begin{equation}\label{inv}
 \int G^k(x,y)\Delta_kf(y)w_k(y)dy=-2f(x).
\end{equation}
In particular,
\begin{equation}\label{dynk}
H_Df(x)-f(x)=\frac 12 E^x\left[\int_0^{\tau_D}\Delta_kf(X_s) ds\right].
\end{equation}
\end{lemma}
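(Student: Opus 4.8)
The plan is to prove (\ref{inv}) first, by means of the semigroup $(P_t^k)$, and then deduce the Dynkin-type identity (\ref{dynk}) from it by a strong Markov argument. I first record two facts about $g:=\Delta_k f$. Although formula (\ref{lapd}) involves the singular factors $\langle\alpha,x\rangle^{-1}$ and $\langle\alpha,x\rangle^{-2}$, for $f\in C^2(\R^d)$ these cancel and $g$ is continuous on $\R^d$ (Dunkl, \cite{dunkl}); moreover, since $\mathrm{supp}\,f$ is compact, $g$ vanishes off the compact set $K:=\bigcup_{w\in W}w(\mathrm{supp}\,f)$, so $g$ is bounded with compact support, and $f$ lies in the domain of the generator of $(P_t^k)$, on which it acts as $\tfrac12\Delta_k$. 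Consequently
$$
P_t^k f(x)-f(x)=\frac12\int_0^t P_s^k g(x)\,ds=\frac12\int_0^t\!\!\int p_s^k(x,y)\,g(y)\,w_k(y)\,dy\,ds,\qquad t>0 .
$$
I then let $t\to\infty$. On the left, $P_t^k f(x)=\int p_t^k(x,y)f(y)w_k(y)\,dy\to0$ by dominated convergence from the explicit kernel (\ref{chad}) (equivalently, $X$ is transient since $\lambda>0$). On the right, for any ball $B\supset K\cup\{x\}$ the occupation-time bound
$$
\int_0^\infty\!\!\int|g(y)|\,p_s^k(x,y)w_k(y)\,dy\,ds\le\|g\|_\infty\,E^x\!\Big[\int_0^\infty\mathbf 1_B(X_s)\,ds\Big]<\infty
$$
— finite by the computation in the proof of Proposition \ref{taus} — justifies Fubini's theorem and identifies the limit of the right-hand side as $\tfrac12\int\big(\int_0^\infty p_s^k(x,y)\,ds\big)g(y)w_k(y)\,dy=\tfrac12\int G^k(x,y)g(y)w_k(y)\,dy$. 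This yields (\ref{inv}).

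For (\ref{dynk}), write $G^kg(x):=\int G^k(x,y)g(y)w_k(y)\,dy$. By Tonelli's theorem and the finiteness just established, $G^kg(x)=\int_0^\infty P_s^kg(x)\,ds=E^x[\int_0^\infty g(X_s)\,ds]$, the interchange being legitimate because $E^x[\int_0^\infty|g|(X_s)\,ds]=G^k|g|(x)<\infty$. Since $E^x[\tau_D]<\infty$ by Proposition \ref{taus}, we have $\tau_D<\infty$ $P^x$-almost surely, so the strong Markov property at $\tau_D$ gives
$$
H_D(G^kg)(x)=E^x\big[(G^kg)(X_{\tau_D})\big]=E^x\Big[\int_{\tau_D}^\infty g(X_s)\,ds\Big],
$$
and subtracting, $G^kg(x)-H_D(G^kg)(x)=E^x[\int_0^{\tau_D}g(X_s)\,ds]$. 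By (\ref{inv}) one has $G^kg=-2f$ on $\R^d$, so the left-hand side equals $-2f(x)+2H_Df(x)$, which is (\ref{dynk}).

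The step I expect to require the most care is the passage $t\to\infty$ together with the Fubini/Tonelli interchanges: all of them reduce to the finiteness of the Green potential $G^k|\Delta_k f|$, i.e. to finiteness of expected occupation times of compact sets — precisely the place where the standing hypothesis $\lambda>0$ is used and which is furnished by (the proof of) Proposition \ref{taus}. The only other point not to be glossed over is that, in spite of the apparent singularities in (\ref{lapd}), $\Delta_k f$ is a genuine bounded, compactly supported continuous function for $f\in C^2_c(\R^d)$.
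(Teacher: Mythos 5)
Your proposal is correct and follows essentially the paper's own route: you integrate the semigroup identity $\frac{\partial}{\partial t}P_t^kf=\frac12P_t^k\Delta_kf$ over $t\in(0,\infty)$, use $P_t^kf\to f$ as $t\to0$ and $P_t^kf\to0$ as $t\to\infty$ to obtain (\ref{inv}), and then derive (\ref{dynk}) from (\ref{inv}) via the strong Markov property at $\tau_D$, exactly as the paper indicates. The additional justifications you supply (continuity and compact support of $\Delta_kf$, the occupation-time bound from Proposition \ref{taus} legitimizing the Fubini interchanges) are correct elaborations of steps the paper leaves implicit.
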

\begin{proof}
To get (\ref{inv}) it suffices to recall that
$$
\frac{\partial}{\partial t}P_t^k=\frac12 P_t^k\Delta_k,\quad t>0.
$$
Then, we integrate over $t$ and use the fact that $\lim_{t\rightarrow0}P_t^kf(x)=f(x)$ and $\lim_{t\rightarrow\infty}P_t^kf(x)=0$ for every  $x\in\R^d$.
Formula (\ref{dynk}) follows from (\ref{inv}) and the strong Markov property.
\end{proof}
Let $U$ be an open subset of $\R^d$. A function $h:^W\!\!\!\!U\rightarrow\R$ is said to be \emph{$\Delta_k$-harmonic} in $U$ if $h\in C^2(U)$ and $\Delta_kh(x)=0$ for every $x\in U$.

\begin{theorem}\label{fond1}
 Let $U$ be an open subset of $\R^d$ and let $h\in C(\wu)$. If $h\in C^2(U)$ then $h$ is $\Delta_k$-harmonic in $U$ if and only if $h$ is $X$-harmonic in $U$.
\end{theorem}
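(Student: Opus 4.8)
The plan is to prove the two implications separately, in each case reducing to the Dynkin-type identity~(\ref{dynk}) --- which is only at our disposal for $C^2_c$ functions --- by a mollification argument. Throughout, $D$ will denote a bounded open set with $\overline D\subset U$; then $\overline{{}^{W}D}=\bigcup_{w\in W}w(\overline D)$ is a compact subset of ${}^{W}U$ on which $h$ is continuous, and, by Theorem~\ref{support}, $\mathrm{supp}\,H_D(x,\cdot)\subset\Gamma_D\subset\overline{{}^{W}D}$.

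To prove that $\Delta_k$-harmonic implies $X$-harmonic, I would fix $D$ and $x\in D$ and choose a continuous, compactly supported function $\widehat h$ on $\R^d$ which agrees with $h$ on $\overline{{}^{W}D}$ and, in addition, coincides with $h$ --- hence is of class $C^2$ --- on an open neighbourhood $V$ of $\overline D$ with $\overline V\subset U$. Such a $\widehat h$ is obtained by gluing a Tietze extension of the restriction of $h$ to $\overline{{}^{W}D}$ with $\chi h$, where $\chi\in C_c^\infty(U)$ equals $1$ near $\overline D$; when $\overline D$ meets a reflecting hyperplane $H_\alpha$, one also enlarges $V$ to contain a small $\sigma_\alpha$-invariant tube about the compact set $\overline D\cap H_\alpha$, which is legitimate since $h$ is of class $C^2$ on both sides of $H_\alpha$ near that set. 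Putting $\widehat h_\varepsilon=\widehat h*\rho_\varepsilon$, one has $\widehat h_\varepsilon\to\widehat h$ uniformly on $\R^d$ and in $C^2$ on compact subsets of $V$; applying~(\ref{dynk}) to $\widehat h_\varepsilon$ and letting $\varepsilon\to0$ should yield
\begin{equation*}
H_Dh(x)-h(x)=\tfrac12\,E^x\!\left[\int_0^{\tau_D}\Delta_kh(X_s)\,ds\right]=0.
\end{equation*}
On the left I would use the uniform convergence together with $\widehat h=h$ on $\{x\}\cup\Gamma_D$; for the integrand I would use that $X_s\in D$ and $\sigma_\alpha X_s\in{}^{W}D\subset\overline{{}^{W}D}$ whenever $s<\tau_D$, so that $\Delta_k\widehat h_\varepsilon(X_s)\to\Delta_kh(X_s)=0$; and I would justify dominated convergence from $E^x[\tau_D]<\infty$ (Proposition~\ref{taus}) together with an $\varepsilon$-uniform bound for $|\Delta_k\widehat h_\varepsilon|$ on $\overline D$.

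For the converse I would first observe that $\Delta_kh$ is a continuous function on $U$ (the apparent singularities on $U\cap H_\alpha$ are removable, by a Taylor expansion in the direction $\alpha$, $h$ being $C^2$ on both sides there), so that it suffices to show $\Delta_kh(x_0)=0$ for $x_0$ in the dense subset $U\setminus\bigcup_{\alpha}H_\alpha$. For such an $x_0$ I would pick a ball $B=B(x_0,r)$ with $\overline B\subset U$ and $\overline B\cap\bigcup_{\alpha}H_\alpha=\emptyset$; the same extension--mollification scheme --- now free of hyperplane difficulties, since $\langle\alpha,\cdot\rangle$ is bounded below on $\overline B$ --- gives
\begin{equation*}
H_Bh(x_0)-h(x_0)=\tfrac12\,E^{x_0}\!\left[\int_0^{\tau_B}\Delta_kh(X_s)\,ds\right],
\end{equation*}
and the left-hand side vanishes by $X$-harmonicity. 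Since $0<E^{x_0}[\tau_B]<\infty$ and $\Delta_kh$ is continuous at $x_0$, this forces
\begin{equation*}
\tfrac12\,\bigl|\Delta_kh(x_0)\bigr|\le\tfrac12\sup_{B(x_0,r)}\bigl|\Delta_kh-\Delta_kh(x_0)\bigr|,
\end{equation*}
whose right-hand side tends to $0$ as $r\to0$; hence $\Delta_kh(x_0)=0$.

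The step I expect to be the main obstacle concerns the difference part of $\Delta_k$: identity~(\ref{dynk}) requires genuinely $C^2_c$ test functions, whereas $h$ is only continuous at the reflected points $\sigma_\alpha(\overline D)$, which need not lie inside $U$; this is what forces the detour through a merely continuous extension and its mollifications. The delicate point there is the $\varepsilon$-uniform control of $\Delta_k\widehat h_\varepsilon$ near a reflecting hyperplane: away from $\bigcup_{\alpha}H_\alpha$ one uses that $\langle\alpha,\cdot\rangle$ is bounded below, while near $H_\alpha$ one needs the ``removable singularity'' bound $\bigl|\frac{\langle\nabla g(y),\alpha\rangle}{\langle\alpha,y\rangle}-\frac{g(y)-g(\sigma_\alpha y)}{\langle\alpha,y\rangle^2}\bigr|\le C\,\|g\|_{C^2}$, valid for $C^2$ functions $g$ on a $\sigma_\alpha$-invariant neighbourhood by a second-order Taylor expansion in the direction $\alpha$ --- which is exactly why the auxiliary set $V$ was chosen $\sigma_\alpha$-symmetric near $\overline D\cap H_\alpha$.
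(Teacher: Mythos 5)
Your argument is correct in substance, but it reaches the key identity
\begin{equation*}
H_Dh(x)-h(x)=\tfrac12\,E^x\Bigl[\int_0^{\tau_D}\Delta_kh(X_s)\,ds\Bigr]
\end{equation*}
by a genuinely different route than the paper. The paper also starts from (\ref{dynk}) applied to a function $f\in C^2_c(\R^d)$ coinciding with $h$ on a neighbourhood $V$ of $\overline D$, but it treats the mismatch $\psi=h-f$ (which lives only on the reflected copies of $D$, outside $V$) exactly, via the L\'evy kernel of the Dunkl process (\ref{lvk}) and the Ikeda--Watanabe formula (\ref{wat}): since the jumps of $X$ are precisely reflections $\sigma_\alpha$ with intensity $k(\alpha)/\langle\alpha,\cdot\rangle^2$, the term $H_D\psi(x)$ cancels exactly against the difference $\Delta_kf-\Delta_kh$ on $D$ coming from the reflection part of (\ref{lapd}), with no limiting procedure at all. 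You instead replace $h$ by a continuous compactly supported extension agreeing with $h$ on $\overline{\w}$ and $C^2$ near $\overline D$, mollify, apply (\ref{dynk}) to the mollifications and pass to the limit; the price is the $\varepsilon$-uniform control of the difference quotients near the reflecting hyperplanes, which you correctly reduce to a second-order Taylor bound on a $\sigma_\alpha$-invariant tube around $\overline D\cap H_\alpha$ (legitimate, since such a tube lies in $U$ where $h$ is $C^2$), together with $E^x[\tau_D]<\infty$ (Proposition \ref{taus}) and Theorem \ref{support} for the boundary term --- the same external ingredients as the paper except that you dispense with the L\'evy-system result of Ikeda--Watanabe. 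What each approach buys: the paper's proof is shorter and exploits the specific probabilistic jump structure of the Dunkl process; yours is more elementary on the probabilistic side but carries noticeably heavier real-analytic bookkeeping (choice of invariant tubes, uniform $C^2$ bounds, dominated convergence, and a.e.\ treatment of the times when $X_s$ sits on a hyperplane, which you should at least mention since the pointwise formula for $\Delta_k\widehat h_\varepsilon$ degenerates there but the occupation time of $\cup_\alpha H_\alpha$ is a.s.\ Lebesgue-null). For the converse direction the two arguments are essentially the same shrinking-neighbourhood computation; your variant of first working at points off the hyperplanes and then invoking continuity of $\Delta_kh$ on $U$ is a harmless refinement, since that continuity (removability of the singularities on $U\cap H_\alpha$) is exactly what the paper also uses.
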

\begin{proof} Let $D\Subset U$ and let $x\in D$. Then
\begin{equation}\label{dyn}
   H_Dh(x)-h(x)=
   \frac 12 E^x\left[\int_0^{\tau_D}\Delta_k h(X_s)ds\right].
\end{equation}
In fact, choose an open set $V$ such that $D\Subset V\Subset U$, $f\in C^2_c(\R^d)$
which coincides with $h$ in $V$ and let $\psi=h-f$.
Then using (\ref{dynk}) we obtain
\begin{equation}\label{dyn1}
H_Dh(x)-h(x)
  = \frac 12 E^x\left[\int_0^{\tau_D}\Delta_kf(X_s)ds\right]+H_D\psi(x).
\end{equation}
For every $y\in\R^d$, let $N(y,dz)$ be the L\'evy kernel of the Dunkl process $X$ which is given by the following formula \cite{gy}
\begin{equation}\label{lvk}
N(y,dz)=
\sum_{\alpha\in R_+, \langle y,\alpha\rangle\neq0}\frac{k(\alpha)}{\langle\alpha,y\rangle^2}\delta_{\sigma_\alpha y}(dz).
\end{equation}
Since $\psi =0$ on $V$, it follows from \cite[Theorem 1]{iw} that
\begin{equation}\label{wat}
H_D\psi(x)=
E^x
 \left[\int_0^{\tau_D}\int \psi(z)N(X_s,dz)ds\right].
\end{equation}
On the other hand, by (\ref{lapd}) and (\ref{lvk})  we easily see that  for every $y\in D$,
\begin{equation}\label{wat1}
\Delta_kf(y)=\Delta_kh(y)-2\int \psi(z)N(y,dz).
\end{equation}
Thus formula~(\ref{dyn}) is obtained by combing (\ref{dyn1}), (\ref{wat}) and
(\ref{wat1}) above.
Now,  $h$ is obviously $X$-harmonic in $U$ whenever it is $\Delta_k$-harmonic in $U$.
Conversely, assume that $h$ is $X$-harmonic  in $U$ and let $x\in U$. Since $h\in C(\wu)\cap C^2(U)$ then $\Delta_kh$ is continuous in $U$ and consequently for every $\varepsilon>0$ there exists an open neighborhood $
D\Subset U$
of $x$ such that
$|\Delta_kh(y)-\Delta_kh(x)|\leq \varepsilon$ for every $y\in D$. Using formula (\ref{dyn}), we obtain
$$
|\Delta_kh(x)|=
 \frac{1}{E^x[\tau_D]}\left|E^x\left[\int_0^{\tau_D}\left(\Delta_kh(X_s)-\Delta_kh(x)\right)ds\right]\right|
\leq \varepsilon.
$$
Hence $\Delta_kh(x)=0$ as desired.
\end{proof}

\section{Regular Sets}
A bounded open subset  $D$ of $\R^d$ is said to be \emph{regular} if each $z\in \partial D$ is regular for $D$.
A complete  study of regularity is developed by J. Bliedtner and W. Hansen in \cite{hansen}. It follows that a point $z\in \partial D$ is regular for $D$ if and only if for every $f\in C(\Gamma_D)$,
$$
\lim_{x\in D,x\rightarrow z}H_D f(x)=f(z).
$$
Consequently, $H_Df$ is continuous on $^W\!\!\overline{D}$ whenever $D$ is regular and $f\in C(\Gamma_D)$.
 \begin{exemple}\rm
For all $R>r>0$, the ball $B(0,R)$ and the annulus $C(r,R)=\{x\in \R^d; \; r<\|x\|<R\}$ are regular.
 \end{exemple}
In fact, by \cite[Proposition VII.3.3]{hansen}, it is sufficient to find a neighborhood $V$ of $z\in \partial D$ and a real function $u$ such that
 \begin{itemize}
 \item [i)] $u$ is positive in $V\cap D$,
 \item [ii)] $u$ is $X$-harmonic in $V\cap D$,
 \item [iii)]$\lim_{x\in V\cap D,x\rightarrow z}u(x)=0$.
 \end{itemize}
Consider $V=\R^d\backslash\{0\}$ and $g$ the function defined on $V$ by
$$g(x)=\frac{1}{|x|^{2\lambda}}.$$
Using formula (\ref{lapd}), simple computation  shows that $g$ is $\Delta_k$-harmonic in $V$ which yields, by theorem \ref{fond1}, that $g$ is $X$-harmonic in $V$. Let $z\in \R^d$ such that $|z|=R$ and consider
$$u(x)=g(x)-\frac{1}{R^{2\lambda}},\quad x\in V.$$
It is clear that $u$ satisfy (i), (ii) and (iii) above with $D=B(0,R)$ or $D=C(r,R)$. Hence $z$ is regular for $D$.
Similarly, taking
$$u(x)=\frac{1}{r^{2\lambda}}-g(x),\quad x\in V,$$
we conclude that all points $z\in \R^d$ such that $|z|=r$ are regular for $C(r,R)$.
\\

A sufficient condition for regularity, known as the cone condition, is given in the following theorem for a particular root system $R$.
  \begin{theorem}
Let $(e_1,..., e_d)$ be the canonical basis of $\R^d$ and consider the root system  $R=\{\pm e_i,\;1\leq i\leq d\}$.
   Let $D$ be a bounded open subset of $\R^d$ and let $z\in \partial D$. Assume that there exists a cone  $C$ of vertex $z$  such that $C\cap B(z,r)\subset D^c$ for some  $r>0$. Then $z$ is regular for $D$.
  \end{theorem}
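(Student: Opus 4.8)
The plan is to verify directly the probabilistic criterion for regularity: we show $P^z[\tau_D=0]>0$, which by Blumenthal's zero--one law upgrades to $P^z[\tau_D=0]=1$, i.e. $z$ is regular for $D$. Since $C\cap B(z,r)\subset D^c$, whenever $X_t\in C\cap B(z,r)$ the process has already left $D$ by time $t$, so $\{X_t\in C\cap B(z,r)\}\subset\{\tau_D\le t\}$; letting $t\downarrow0$ along the decreasing family $\{\tau_D\le t\}$, whose intersection is $\{\tau_D=0\}$, we get
$$
P^z[\tau_D=0]=\lim_{t\downarrow0}P^z[\tau_D\le t]\ \ge\ \liminf_{t\downarrow0}P^z\bigl(X_t\in C\cap B(z,r)\bigr).
$$
Thus everything reduces to showing the right-hand side is strictly positive. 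Here $C$ is understood to be a solid cone, so $C_0:=C-z$ is a cone with vertex $0$ and $\mathrm{int}(C_0)\ne\emptyset$, as in the classical cone condition.

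The decisive input --- and the only place where the particular root system $R=\{\pm e_i\}$ is used --- is that the Dunkl structure decouples along coordinates: $w_k(y)=\prod_{i=1}^d|y_i|^{2k_i}$ with $k_i:=k(e_i)$, the Dunkl kernel of a product root system factorizes, $E_k(x,y)=\prod_iE_{k_i}(x_i,y_i)$, and hence by (\ref{chad}) $p_t^k(x,y)=\prod_ip_t^{k_i}(x_i,y_i)$, where $p_t^{k_i}$ is the one-dimensional Dunkl heat kernel with parameter $k_i$. Consequently $X$ is the product of $d$ independent one-dimensional Dunkl processes, $X_t=(X_t^1,\dots,X_t^d)$ with $X^i$ started at $z_i$. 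I would then study the rescaled coordinates $Y_t^i:=(X_t^i-z_i)/\sqrt t$ as $t\downarrow0$. If $z_i=0$, the manipulation in (\ref{chad}) --- exactly the one used in the proof of Proposition~\ref{taus} --- shows that $X^i$ started at $0$ is self-similar of index $\tfrac12$, so $Y_t^i=X_t^i/\sqrt t$ has, for every $t>0$, the law $P_1^{k_i}(0,\cdot)$, whose density is proportional to $e^{-u^2/2}|u|^{2k_i}$ and therefore strictly positive for $u\ne0$. If $z_i\ne0$, then on a small interval around $z_i$ avoiding $0$ the operator $\tfrac12\Delta_{k_i}$ is uniformly elliptic with bounded coefficients and the L\'evy kernel~(\ref{lvk}) has bounded mass, so with probability $1-O(t)$ the path of $X^i$ makes no jump and does not leave that interval before time $t$; hence $Y_t^i$ converges in law to a non-degenerate centered Gaussian. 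In either case the weak limit of $Y_t^i$ has full support, and by independence $Y_t:=(X_t-z)/\sqrt t$ converges in law, as $t\downarrow0$, to a probability measure $\mu$ on $\R^d$ whose density is strictly positive off the coordinate hyperplanes.

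To finish, observe that $C_0/\sqrt t=C_0$, so $\{X_t\in C\cap B(z,r)\}=\{Y_t\in C_0\cap B(0,r/\sqrt t)\}$. Since $\mathrm{int}(C_0)$ is a nonempty open set it is not contained in the finitely many coordinate hyperplanes, so $\mu(\mathrm{int}(C_0))>0$; fix $M$ so large that $\mu(\mathrm{int}(C_0)\cap B(0,M))>0$. For $t$ with $r/\sqrt t>M$ the set $C_0\cap B(0,r/\sqrt t)$ contains the open set $\mathrm{int}(C_0)\cap B(0,M)$, so by the portmanteau theorem
$$
\liminf_{t\downarrow0}P^z\bigl(X_t\in C\cap B(z,r)\bigr)\ \ge\ \liminf_{t\downarrow0}P^z\bigl(Y_t\in\mathrm{int}(C_0)\cap B(0,M)\bigr)\ \ge\ \mu\bigl(\mathrm{int}(C_0)\cap B(0,M)\bigr)\ >\ 0,
$$
which combined with the first display gives $P^z[\tau_D=0]>0$, hence $=1$. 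The main obstacle is the distributional limit for $Y_t$: it rests on the coordinate decoupling, available only for $R=\{\pm e_i\}$, together with the self-similarity of the one-dimensional Dunkl process \emph{about the origin}, which copes with boundary points having some vanishing coordinates, and its diffusive small-time behavior near any nonzero point. For a general root system neither the product structure nor a suitably centered self-similarity is at hand, which is precisely why in that generality one is limited to balls centered at $0$.
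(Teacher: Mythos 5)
Your proposal is sound and its skeleton coincides with the paper's: both reduce regularity, via $P^z[\tau_D\le t]\ge P^z[X_t\in C\cap B(z,r)]$ and Blumenthal's zero--one law, to showing $\liminf_{t\to0}P^z[X_t\in C\cap B(z,r)]>0$, and both exploit the $\sqrt t$-scaling together with the coordinate decoupling special to $R=\{\pm e_i\}$. The execution of the key limit differs. The paper stays analytic: it writes the probability as the rescaled heat-kernel integral (\ref{reg}), invokes Xu's product formula $e^{-\langle x,y\rangle}E_k(x,y)=\prod_iM(k_i,2k_i+1,-2x_iy_i)$, computes the $t\to0$ limit of each one-dimensional factor (separating $z_i=0$ from $z_i\ne0$) via the integral representation of the Kummer function and dominated convergence, and finishes with Fatou's lemma applied to (\ref{reg}). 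You replace this by a process-level argument: independence of the coordinate Dunkl processes, exact self-similarity of the coordinate started at $0$, a Gaussian small-time limit for coordinates with $z_i\ne0$, and the portmanteau theorem on an open truncated cone. Both yield a limiting ``density'' positive off the coordinate hyperplanes, hence positivity on the solid cone; yours avoids special-function asymptotics at the price of a weak-convergence statement, the paper avoids any pathwise reasoning at the price of the Kummer computation.

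The one step you assert rather than prove is exactly the point the paper's computation settles: for $z_i\ne0$, the claim that $(X^i_t-z_i)/\sqrt t\Rightarrow N(0,1)$. Your justification (local uniform ellipticity, bounded drift, Lévy kernel (\ref{lvk}) of bounded mass near $z_i$, no jump with probability $1-O(t)$) is correct in substance but heuristic as written: to close it you must identify the no-jump dynamics near $z_i$ --- between sign-flip jumps, whose rate is bounded away from the origin, the one-dimensional Dunkl process solves $dX=dB+\frac{k_i}{X}\,dt$ with drift bounded on a compact interval avoiding $0$, so $(X^i_t-z_i)/\sqrt t=B_t/\sqrt t+O(\sqrt t)\Rightarrow N(0,1)$ --- or else quote/redo the heat-kernel asymptotics, which is what the paper does via the Kummer integral representation. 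This is a fillable gap, not a flaw of strategy. Two smaller points: the factorizations $w_k(y)=\prod_i|y_i|^{2k_i}$, $E_k(x,y)=\prod_iE_{k_i}(x_i,y_i)$, hence $p_t^k=\prod_ip_t^{k_i}$ and independence of coordinates, hold for this $\Z_2^d$-type root system and should be cited (they are the probabilistic counterpart of the product formula the paper uses); and, like the paper, you must read ``cone'' as a solid cone, i.e.\ $\mathrm{int}(C_0)\ne\emptyset$, since otherwise both your bound $\mu(\mathrm{int}(C_0))>0$ and the paper's $\int_{C_0}e^{-|y|^2/2}\theta(y)\,dy>0$ can fail.
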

\begin{proof}
It is trivial that $P^z[\tau_D\leq t]\geq P^z[X_t\in C\cap B(z,r)]$ for all $t>0$. Therefore, in virtue of Blumenthal's zero-one low, it is sufficient to show that $\liminf_{t\rightarrow 0}P^z[X_t\in C\cap B(z,r)]$ is positive. Denote $C_0=C-z$, then
\begin{eqnarray}
 P^z[X_t\in C\cap B(z,r)]&=& \int_{C\cap B(z,r)}p_t^k(z,y)w_k(y)dy  \nonumber \\
 &=& \frac{1}{t^\gamma}\int_{C_0\cap B(0,\frac{r}{\sqrt{t}})}p_1^k(\frac{z}{\sqrt{t}},\frac{z}{\sqrt{t}}-y)w_k(z-\sqrt{t}y) dy.\label{reg}
\end{eqnarray}
It is trivial to see, from (\ref{chad}), that
$$
p_1^k(\frac{z}{\sqrt{t}},\frac{z}{\sqrt{t}}-y)= e^{-\frac{|y|^2}{2}}e^{-\langle\frac zt,z-\sqrt{t}y\rangle} E_k\left(\frac zt,z-\sqrt{t}y\right).
$$
Let  $k_i=k(e_i)$ and $y_i=\langle y,e_i\rangle$ for every $y\in\R^d$ and  $i\in \{1,...,d\}$.
It is known \cite{xu'} that for all $x,y\in \R^d$,
$$e^{-\langle x,y\rangle}E_k(x,y)=\prod_{i=1}^dM(k_i,2k_i+1,-2x_iy_i).$$
 $M(k_i,2k_i+1,\cdot)$ denotes the Kummer's function defined on $\R$ by
$$M(k_i,2k_i+1,s)=\sum_{n\geq 0}\frac{(k_i)_n}{(2k_i+1)_n}\frac{s^n}{n!}=1+\frac{k_i}{2k_i+1}s+\frac{k_i(k_i+1)}{(2k_i+1)(2k_i+2)}\frac{s^2}{2!}+ \cdots\quad.$$
Therefore, for any $y\in \R^d$ and $t>0$, we have
$$
\begin{array}{lll}
\displaystyle \frac{1}{t^\gamma}e^{-\langle\frac zt,z-\sqrt{t}y\rangle}E_k\left(\frac zt,z-\sqrt{t}y\right)w_k\left(z-\sqrt{t}y\right)\medskip\\
\displaystyle =\prod_{i=1}^d\frac{M\left(k_i,2k_i+1,-2\frac{z_i}{t}
(z_i-\sqrt{t}y_i)\right)(z_i-\sqrt{t}y_i)^{2k_i}}{t^{k_i}}.
\end{array}
$$
First, it is clear that
$$\frac{1}{t^{k_i}}M\left(k_i,2k_i+1,-2\frac{z_i}{t}(z_i-\sqrt{t}y_i)\right)(z_i-\sqrt{t}y_i)^{2k_i}=
\left\{
  \begin{array}{ll}
    1 & \textrm{if} \; k_i=0 \\
    y_i^{2k_i} & \textrm{if}\; z_i=0.
  \end{array}
\right.
$$
Next, assume that $k_i >0$ and $z_i\neq0$ for some $i\in \{1,\cdots,d\}$. Then, it follows from the integral representation of $M(k_i,2k_i+1,\cdot)$ that
\begin{eqnarray*}
&&\hspace*{-2cm}\frac{1}{t^{k_i}}M\left(k_i,2k_i+1,-2\frac{z_i}{t}(z_i-\sqrt{t}y_i)\right) \\
 &=&\frac{\Gamma(2k_i+1)}{\Gamma(k_i)\Gamma(k_i+1)}\int_0^1\frac{1}{t^{k_i}}e^{-2\frac{z_i}{t}(z_i-\sqrt{t}y_i)u}u^{k_i-1}(1-u)^{k_i}du\\
   &=& \frac{\Gamma(2k_i+1)}{\Gamma(k_i)\Gamma(k_i+1)}\int_0^{\frac 1t}e^{-2z_i(z_i-\sqrt{t}y_i)v}v^{k_i-1}(1-tv)^{k_i}dv.
\end{eqnarray*}
Now, applying the Lebesgue dominated convergence theorem, we obtain
$$
 \lim_{t\rightarrow0} \frac{1}{t^{k_i}}M\left(k_i,2k_i+1,-2\frac{z_i}{t}(z_i-\sqrt{t}y_i)\right) = \frac{\Gamma(k_i+1)}{\sqrt{\pi}z_i^{2k_i}}.
 $$
Thus
$$
\begin{array}{lll}
\displaystyle \lim_{t\rightarrow0}\frac{1}{t^\gamma}e^{-\langle\frac zt,z-\sqrt{t}y\rangle}E_k\left(\frac zt,z-\sqrt{t}y\right)w_k\left(z-\sqrt{t}y\right)\medskip\\
\displaystyle \geq \prod_{i=1}^d\min\left(1,y_i^{2k_i},\frac{\Gamma(k_i+1)}{\sqrt{\pi}}\right)=:\theta(y).
\end{array}
$$
Hence,  Fatou's lemma applied to (\ref{reg}) yields that
$$\liminf_{t\rightarrow 0}P^z[X_t\in C\cap B(z,r)]\geq \int_{C_0}e^{-\frac{|y|^2}{2}}\theta(y)dy>0.$$
\end{proof}
\section{ Dirichlet Problem}
 This section is devoted to study the following Dirichlet problem :
    Giving a regular open subset $D$ of $\R^d$ and a function $f\in C(\Gamma_D)$, we shall investigate existance and  uniqueness of  function $h\in C(\overline{\w})\cap C^2(D)$ satisfying the boundary value problem
 \begin{equation}\label{pbd}
 \left\{\begin{array}{rcll}
\Delta_k h&=&0&\;\textrm{in}\; D,\\
 h&=&f&\;\textrm{in}\; \Gamma_D.
\end{array}
\right.
 \end{equation}
 For every square integrable functions $\varphi$ and $\psi$ on $\R^d$ with respect to the measure $w_k(x)dx$, we define
 $$\langle\varphi,\psi\rangle_k=\int \varphi(x)\psi(x)w_k(x)dx.$$
\begin{lemma}
For every  bounded open set $D$ and
 for every $\varphi,\psi\in C^2_c(\R^d)$,
\begin{equation}\label{td}
    \langle H_D\psi, \Delta_k\varphi\rangle_k = \langle \Delta_k\psi,H_D\varphi\rangle_k.
\end{equation}
\end{lemma}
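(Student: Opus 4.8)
The plan is to turn both sides of (\ref{td}) into an integral of a single \emph{symmetric} kernel against the densities $\Delta_k\psi$ and $\Delta_k\varphi$, using the Green potential representation (\ref{inv}) to create that kernel and the reciprocity relation (\ref{blum}) to prove its symmetry.

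First, since $\psi\in C^2_c(\R^d)$, formula (\ref{inv}) yields the pointwise identity $\psi(z)=-\tfrac12\int G^k(z,y)\Delta_k\psi(y)w_k(y)\,dy$ for every $z$. Integrating this against the harmonic measure $H_D(x,\cdot)$ and interchanging the order of integration, one gets
\[
H_D\psi(x)=-\tfrac12\int u(x,y)\,\Delta_k\psi(y)\,w_k(y)\,dy,\qquad u(x,y):=\int G^k(z,y)\,H_D(x,dz),
\]
the Fubini step being legitimate because $u(x,y)\le G^k(x,y)$ by (\ref{ghd}) and the symmetry of $G^k$, while $\Delta_k\psi$ is bounded with compact support and $G^k(x,\cdot)$ is locally $w_k$-integrable. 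Substituting this expression into the left-hand side of (\ref{td}) and interchanging once more (again controlled by $u\le G^k$ and the boundedness and compact support of $\Delta_k\varphi$, $\Delta_k\psi$), we obtain
\[
\langle H_D\psi,\Delta_k\varphi\rangle_k=-\tfrac12\iint u(x,y)\,\Delta_k\psi(y)\,\Delta_k\varphi(x)\,w_k(x)\,w_k(y)\,dx\,dy.
\]

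The decisive observation is that $u$ is symmetric: combining the reciprocity identity (\ref{blum}) with $G^k(z,y)=G^k(y,z)$ gives $u(x,y)=\int G^k(y,z)H_D(x,dz)=\int G^k(x,z)H_D(y,dz)=u(y,x)$. Hence the double integral above is invariant under the simultaneous exchange $(\psi,x)\leftrightarrow(\varphi,y)$, so it equals $\langle H_D\varphi,\Delta_k\psi\rangle_k$, and since $\langle\cdot,\cdot\rangle_k$ is symmetric this is exactly $\langle\Delta_k\psi,H_D\varphi\rangle_k$, which is (\ref{td}). (Alternatively, one may collapse the inner $x$-integral using (\ref{inv}) applied to $\varphi$: $\int u(x,y)\Delta_k\varphi(x)w_k(x)\,dx=\int H_D(y,dz)\int G^k(x,z)\Delta_k\varphi(x)w_k(x)\,dx=-2H_D\varphi(y)$, which reproduces the right-hand side directly.)

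The structural content is carried entirely by (\ref{inv}), (\ref{blum}) and (\ref{ghd}); the only genuine work is the bookkeeping for the two interchanges of integration, and this is where I expect the main (minor) obstacle to lie — namely, verifying that the nonnegative majorant $G^k(x,y)$ is integrable enough against the bounded, compactly supported densities $\Delta_k\psi$ and $\Delta_k\varphi$, for which (\ref{ghd}) together with the local $w_k$-integrability of the Green function suffices.
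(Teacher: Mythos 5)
Your proof is correct and follows essentially the same route as the paper's: expand $H_D\psi$ through (\ref{inv}), interchange the integrals (justified, as in the paper, by (\ref{ghd}) together with the local $w_k$-integrability of $G^k$ coming from (\ref{locb})), apply the reciprocity formula (\ref{blum}) with the symmetry of $G^k$, and collapse the remaining integral with (\ref{inv}) applied to $\varphi$. Making the symmetric kernel $u(x,y)=\int G^k(z,y)H_D(x,dz)$ explicit is just a tidy repackaging of the paper's ``Fubini plus (\ref{blum}) plus (\ref{inv})'' step, so no further comparison is needed.
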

\begin{proof}
 Applying formula (\ref{inv}) to $\psi$, we have
\begin{equation}\label{psifi}
\langle H_D\psi, \Delta_k\varphi\rangle_k= -\frac12\int G^k(z,y)\Delta_k\psi(y)w_k(y)dyH_D(x,dz)\Delta_k\varphi(x)w_k(x)dx.
\end{equation}
Then (\ref{td}) is obtained by Fubini's theorem and formulas (\ref{blum}) and  (\ref{inv}). Here,
since $\varphi$ and $\psi$ are with compact supports, formulas (\ref{ghd}) and (\ref{locb}) justify the transformation of the integrals in (\ref{psifi}) by Fubini's theorem.
\end{proof}
 A  set $D$ is called \emph{$W$-invariant} if $\w=D$ which, in turn, is equivalent to $\Gamma_D=\partial D$. We finally have the necessary tools at our disposal for solving the following Dirichlet problem.
\begin{theorem}\label{aim}
Let $D$ be a $W$-invariant regular open subset of $\R^d$. For every function $f\in C(\partial D)$,  there exists one and only one function
 $h\in C(\overline{D})\cap C^2(D)$ such that
\begin{equation}\label{dir}
 \left\{\begin{array}{rcll}
\Delta_kh&=&0&\;\textrm{in}\; D,\\
 h&=&f&\;\textrm{in}\; \partial D.
\end{array}
\right.
 \end{equation}
Moreover, $h$ is given by
 $$
 h(x)=\int_{\partial D}f(y)H_D(x,dy),\quad x\in \overline{D}.
 $$
 \end{theorem}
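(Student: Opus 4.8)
The plan is to produce the solution explicitly as $h:=H_Df$, which is well defined on $\overline{\w}=\overline D$ through formula (\ref{hd}) because the $W$-invariance of $D$ gives $\w=D$ and $\Gamma_D=\partial D$, and then to verify three things: that $h$ realises the boundary data, that $h$ is $\Delta_k$-harmonic in $D$, and that there is no other solution. Only the interior smoothness $h\in C^2(D)$ is non-routine; the rest assembles the facts already available.

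Uniqueness is quick. If $h_1,h_2$ both solve (\ref{dir}), then $g:=h_1-h_2\in C(\overline D)\cap C^2(D)$ satisfies $\Delta_kg=0$ in $D$ and $g=0$ on $\partial D$. Applying Theorem \ref{fond1} with $U=D$ (so that $\wu=D$ and $\overline{\wu}=\overline D$), $g$ is $X$-harmonic in $D$, hence by Proposition \ref{primax}, $\max_{\overline D}g=\max_{\Gamma_D}g=0$ and likewise $\min_{\overline D}g=0$; therefore $g\equiv0$, i.e. $h_1=h_2$.

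For existence, the easy properties of $h=H_Df$ come first. Since $D$ is regular and $f\in C(\partial D)$, the characterisation of regular points recalled at the beginning of Section~3 gives $h\in C(\overline D)$, and for $z\in\partial D$ regularity means $H_D(z,\cdot)=\delta_z$, whence $h(z)=f(z)$. Moreover, extending $f$ to an element of $\mathcal B_b(\R^d)$ with compact support, (\ref{vd}) yields $H_Vh=H_VH_Df=H_Df=h$ on every $V\Subset D$, so $h$ is $X$-harmonic in $D$. By Theorem \ref{fond1} it therefore suffices to prove $h\in C^2(D)$: then $\Delta_kh=0$ in $D$, and combined with $h=f$ on $\partial D$ this gives both the existence assertion and the stated probabilistic formula.

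The heart of the argument, and the step I expect to be the real obstacle, is the interior regularity of $h$, where the $W$-invariance of $D$ and the hypoellipticity of $\Delta_k$ on $W$-invariant open sets (see \cite{kk1,mt2}) are indispensable. The plan is to show that $h$ solves $\Delta_ku=0$ in $D$ in the distributional sense. First, for $\varphi\in C_c^\infty(D)$ one has $H_D\varphi\equiv0$ on $\R^d$: indeed $\varphi$ vanishes on $\partial D=\Gamma_D$, which carries $H_D(x,\cdot)$ for $x\in\overline D$ by Theorem \ref{support}, while $H_D(x,\cdot)=\delta_x$ and $\varphi(x)=0$ for $x\notin\overline D$. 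Hence, applying (\ref{td}) with this $\varphi$ and an arbitrary $\psi\in C_c^\infty(\R^d)$, $\langle H_D\psi,\Delta_k\varphi\rangle_k=\langle\Delta_k\psi,H_D\varphi\rangle_k=0$. Now choose $\psi_n\in C_c^\infty(\R^d)$ with $\psi_n\to f$ uniformly on $\partial D$ (Tietze extension followed by mollification); since $|H_D\psi_n-h|\le\sup_{\partial D}|\psi_n-f|$ on $\overline D$ and $\Delta_k\varphi$ is bounded with compact support contained in $D$, letting $n\to\infty$ gives
$$
\int_D h(x)\,\Delta_k\varphi(x)\,w_k(x)\,dx=0\qquad\text{for every }\varphi\in C_c^\infty(D).
$$
Because $\Delta_k$ is symmetric with respect to $w_k(x)\,dx$ on $C_c^\infty(D)$ — no boundary contribution of the difference part survives precisely because $D$ is $W$-invariant — this says exactly that $h$ is a distributional solution of $\Delta_ku=0$ in $D$, so hypoellipticity forces $h\in C^\infty(D)\subset C^2(D)$, which finishes the proof. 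The delicate point in this last step is to carry out the passage to the distributional equation and the invocation of hypoellipticity rigorously in spite of the vanishing of the weight $w_k$ on the reflecting hyperplanes.
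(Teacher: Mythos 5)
Your proposal is correct and follows essentially the same route as the paper: uniqueness via Theorem \ref{fond1} combined with Proposition \ref{primax}, existence via $h=H_Df$ (continuity and boundary values from regularity, $X$-harmonicity from (\ref{vd})), and interior smoothness by applying Lemma (\ref{td}) with $H_D\varphi\equiv 0$, approximating $f$ uniformly on $\partial D$ by smooth compactly supported functions, and invoking the hypoellipticity of $\Delta_k$ on the $W$-invariant set $D$. Your added verification that $H_D\varphi\equiv 0$ and the remark on the support of $\Delta_k\varphi$ under $W$-invariance are harmless elaborations of steps the paper states without comment.
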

 \begin{proof}
 In virtue of Theorem \ref{fond1}, we observe that for $f\in C(\partial D)$, every solution $h$ of  (\ref{pbd}) satisfies necessarily :
\begin{equation}\label{pbdx}
 \left\{\begin{array}{ll}
h\textrm{ is X-harmonic in}\; D,\\
 h=f\;\textrm{in}\; \partial D.
\end{array}
\right.
 \end{equation}
 Then, by Proposition \ref{primax},  (\ref{dir}) admits at most one solution. The function $H_Df$ is $X$-harmonic in $D$ by (\ref{vd}). Moreover, the regularity of $D$ yields that $H_Df$ is a continuous extension of $f$ to $\overline{D}$. Therefore,
 according to Theorem \ref{fond1}, $H_Df$ will be the unique solution of (\ref{dir}) provided  it is twice differentiable in $D$. On the other hand, it has been shown in \cite{kk1} that  $\Delta_k$ is hypoelliptic in $D$ (see also \cite{mt2}), i.e., a continuous function $g$ in $D$ which satisfies
\begin{equation}\label{hyp}
\langle g,\Delta_k\varphi\rangle_k=0 \quad \textrm{for all}\;\;\varphi\in C^\infty_c(D)
\end{equation}
is necessary infinitely differentiable in $D$. Thus to complete the proof we only need to show that (\ref{hyp}) holds true for $g=H_Df$. To this end
 let $\varphi\in C^\infty_c(D)$ and let $(f_n)_{n\geq 1}\subset~ C^2_c(\R^d)$ be a sequence which converges uniformly to $f$ in $\partial D $. Since $H_D\varphi(y)=0$ for all $y\in \R^d$, applying (\ref{td}) we obtain
\begin{equation}\label{disn}
\langle H_Df_n, \Delta_k\varphi\rangle_k = 0,\quad n\geq 1.
\end{equation}
On the other hand,
\begin{eqnarray*}
\sup_{x\in \overline{D}}|H_Df_n(x)-H_Df(x)|
\leq\sup_{y\in \partial D}|f_n(y)-f(y)|\longrightarrow 0\quad\textrm{as}\quad n\longrightarrow\infty.
\end{eqnarray*}
Hence $H_Df$ satisfies (\ref{hyp})  by letting $n$ tend to $\infty$ in~(\ref{disn}).
\end{proof}
It should be noted that the hypothesis "$D$ is $W$-invariant" is only needed to get the hypoellipticity of $\Delta_k$. For open set $D$ which is not $W$-invariant, the question whether  $\Delta_k$ is hypoelliptic in $D$ or not remained open. In the case of positive answer, analogous arguments as in the proof of Theorem \ref{aim} will immediately  imply  that $H_Df$ is the unique solution of problem (\ref{pbd}).

Let us notice that, using methods from harmonic analysis,  M. Maslouhi and E. H. Youssfi~\cite{masl} studied problem (\ref{dir}) in the special  case where $D=B$ is the unit ball of~$\R^d$. They proved that, for any  $f\in C(\partial B)$, the function $h$ given by
$$ h(x)=\int_{\partial B} P_\kappa(x,y)f(y)w_k(y)\sigma(dy),\; x\in B$$
is the unique solution of (\ref{dir}), where $P_\kappa$ denotes the Poisson kernel introduced by C. F. Dunkl and Y. Xu  \cite{dunklxu}. Hence, our above theorem  immediately yields that for every $x\in B$,
$$
H_B(x,dy)=P_\kappa(x,y)w_k(y)\sigma(dy).
$$

\end{document}